\newtheorem{thrm}{Theorem}[section]
\newtheorem{lem}[thrm]{Lemma}
\newtheorem{prop}[thrm]{Proposition}
\newtheorem{cor}[thrm]{Corollary}
\theoremstyle{definition}
\newtheorem{definition}[thrm]{Definition}
\numberwithin{equation}{section}
\author{Bayram \d{S}ahin}
\address{
Department of Mathematics\\
Inonu University\\
Malatya, Turkey.} \email{bsahin@inonu.edu.tr}
\thanks{}
\keywords{Riemannian submersion, Hermitian manifold, Anti-invariant
Riemannian submersion, Semi-invariant submersion.}
\subjclass{Primary 53B20, Secondary 53C43}
\begin{document}

\title[Semi-invariant Submersions]{Semi-invariant submersions from almost Hermitian
manifolds}

\begin{abstract}We introduce semi-invariant Riemannian submersions from almost
Hermitian manifolds onto Riemannian manifolds. We give  examples,
investigate the geometry of foliations which are arisen from the
definition of a Riemannian submersion and find necessary-sufficient
conditions for total mani-
fold to be a locally product Riemannian
manifold. We also find necessary and sufficient conditions for a
semi-invariant submersion to be totally geodesic. Moreover, we
obtain  a classification for semi-invariant submersions with totally
umbilical fibers and show that such submersions put some
restrictions on total manifolds.
\end{abstract}
\maketitle

\section{Introduction}\label{sect1}

A Riemannian submersion is a smooth submersion $F: M_1
\longrightarrow M_2$ between two Riemannian manifolds $(M_1,g_1)$
and $(M_2,g_2)$ with the property that at any point $p \in M_1$,
$$g_{1p}(x,y)=g_{2F(p)}(F_*(x),F_*(y))$$
for any $x,y$ in the tangent space $T_pM_1$ to $M_1$ at $p \in M_1$,
that are perpendicular to the kernel of $F_*$.\\

  Riemannian
submersions between Riemannian manifolds were studied by O'Neill
\cite{O'Neill} and Gray \cite{Gray}. Later such submersions have
been studied widely in differential geometry. Riemannian submersions
between Riemannian manifolds equipped with an additional structure
of almost complex type was firstly studied by Watson in
\cite{Watson}. Watson defined almost Hermitian submersions between
almost Hermitian manifolds and he showed that the base manifold and
each fiber have the same kind of structure as the total space, in
most cases. More precisely, let $M_1$ be a complex $m-$dimensional
almost Hermitian manifold with Hermitian metric $g_1$ and almost
complex structure $J_1$ and $M_2$ be a complex $n-$dimensional
almost Hermitian ma- nifold with Hermitian metric $g_2$ and almost
complex structure $J_2$. A Riemannian submersion
$F:M_1\longrightarrow M_2$ is called an almost Hermitian submersion
if $F$ is an almost complex mapping, i.e., $F_*J_1=J_2F_*$. The main
result of this notion is that the vertical and horizontal
distributions are $J_1-$ invariant. On the other hand, Escobales
\cite{Escobales} studied Riemannian submersions from complex
projective space onto a Riemannian manifold under the assumption
that the fibers are connected, comp-
lex, totally geodesic
submanifolds. In fact, this assumption also implies that the
vertical distribution is invariant with respect to the almost
complex structure.  We note that almost Hermitian submersions have
been extended to the almost contact manifolds \cite{Domingo},
locally conformal K\"{a}hler manifolds \cite{lck} and quaternion
K\"{a}hler manifolds \cite{Ianus}.\\

 All these submersions mentioned
above have one common property. In these submersions vertical and
horizontal distributions are invariant. Therefore, recently we have
introduced the notion of anti-invariant Riemannian submersions which
are Riemannian submersions from almost Hermitian manifolds such that
their vertical distribution is anti-invariant under the almost
complex structure of
the total ma-
nifold, \cite{Sahin}.\\

In this paper, we introduce semi-invariant Riemannian submersions as
a ge- neralization of anti-invariant Riemannian submersions and
almost Hermitian submersions when the base manifold is an almost
Hermitian manifold. We show that such submersions are useful to
investigate
the geometry of the total manifold of the submersion.\\

The paper is organized as follows. In section 2, we give brief
information about almost Hermitian manifolds, Riemannian
submersions and distributions which are defined by the Riemannian
submersion. In section 3, we define semi-invariant Riemannian
submersion, give examples and investigate the geometry of its
leaves. Then we use these results to obtain decomposition theorems
for the total manifold. We also find necessary and sufficient
conditions for semi-invariant submersions to be totally geodesic.
In section 4, we first show that the notion of semi-invariant
submersions puts some restrictions on the sectional curvature of
the total manifold when it is a complex space form. Then we obtain
a classification theorem of semi-invariant submersions
with totally umbilical fibers.\\

\section{Preliminaries}\label{pre}
In this section, we define almost Hermitian manifolds, recall the
notion of Riemannian submersions between Riemannian manifolds and
give a brief review
of basic facts of Riemannian submersions.\\

Let ($\bar{M}, g$) be an almost Hermitian manifold. This means
\cite{Yano-Kon} that $\bar{M}$ admits a tensor field $J$ of type (1,
1) on $\bar{M}$ such that, $\forall X, Y \in \Gamma(T\bar{M})$, we
have
\begin{equation}
J^2=-I, \quad g(X, Y)=g(JX, JY) \label{eq:2.1}.
\end{equation}
 An almost Hermitian manifold $\bar{M}$ is called  K\"{a}hler manifold if
\begin{equation}
(\bar{\nabla}_XJ)Y=0,\forall X,Y \in \Gamma(T\bar{M}),
\label{eq:2.2}
\end{equation}
where $\bar{\nabla}$ is the Levi-Civita connection on $\bar{M}.
$\\

Let $(M_1^m,g_1)$ and $(M_2^n,g_2)$ be Riemannian manifolds, where
$dim(M_1)=m$, $dim(M_2)=n$ and $m>n$. A Riemannian submersion
$F:M_1\longrightarrow M_2$ is a map from $M_1$ onto $M_2$
satisfying the following axioms:
\begin{itemize}
    \item [(S1)] $F$ has maximal rank.
    \item [(S2)] The differential $F_*$ preserves the lenghts of
    horizontal vectors.
\end{itemize}
For each $q\in M_2$, $F^{-1}(q)$ is an $(m-n)$ dimensional
submanifold of $M_1$. The submanifolds $F^{-1}(q)$, $q\in M_2$, are
called fibers. A vector field on $M_1$ is called vertical if it is
always tangent to fibers. A vector field on $M_1$ is called
horizontal if it is always orthogonal to fibers. A vector field $X$
on $M_1$ is called basic if $X$ is horizontal and $F-$ related to a
vector field $X_*$ on $M_2$, i.e., $F_*X_p=X_{*F(p)}$ for all $p \in
M_1$. Note that we denote the projection morphisms on the
distributions $ker F_*$ and $(kerf F_*)^\perp$ by $\mathcal{V}$ and
$\mathcal{H}$, respectively.\\

We recall the following lemma from O'Neill \cite{O'Neill}.
\begin{lem}\label{Oneil}Let $F:M_1 \longrightarrow
M_2$ be a Riemannian submersion between Riemannian manifolds and $X,
Y$ be basic vector fields of $M_1$. Then we have
\begin{itemize}
    \item [(a)] $g_1(X, Y)=g_2(X_*,Y_*)\circ F$,
    \item [(b)] the horizontal part $[X,Y]^{\mathcal{H}}$ of
    $[X,Y]$ is a basic vector field and corresponds to
    $[X_*,Y_*]$, i.e., $F_*([X,Y]^{\mathcal{H}})=[X_*,Y_*]$.
    \item [(c)] $[V,X]$ is vertical for any  vector field $V$ of
    $ker F_*$.
    \item [(d)] $(\nabla^{^1}_XY)^{\mathcal{H}}$ is the basic vector field  corresponding to $\nabla^{^2}_{X_*}Y_*$,
\end{itemize}
where $\nabla^{^1}$ and $\nabla^{^2}$ are the Levi-Civita
connections of $g_1$ and $g_2$, respectively.
\end{lem}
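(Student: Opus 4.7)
The plan is to verify the four parts using the defining axioms (S1)--(S2) together with the naturality of the Lie bracket under smooth maps. I would proceed in the order (a), (c), (b), (d), since (c) is used inside (b), and (a)--(c) feed into (d).

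For (a), axiom (S2) says that $F_*$ restricted to horizontal vectors is an isometry onto its image, and polarization upgrades this to $g_1(X,Y)_p = g_2(F_*X_p, F_*Y_p)$ for horizontal $X,Y$. Since $X$ and $Y$ are basic, $F_*X_p = X_{*F(p)}$ and $F_*Y_p = Y_{*F(p)}$, which yields the identity. For (c), $V$ vertical means $F_*V \equiv 0$, so $V$ is $F$-related to the zero field on $M_2$, while a basic $X$ is $F$-related to $X_*$. By naturality of the Lie bracket under $F$-related fields, $[V,X]$ is $F$-related to $[0, X_*] = 0$, so $F_*[V,X] = 0$ and $[V,X] \in \ker F_*$. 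For (b), the same naturality gives that $[X,Y]$ is $F$-related to $[X_*,Y_*]$. Writing $[X,Y] = [X,Y]^{\mathcal{H}} + [X,Y]^{\mathcal{V}}$ and using that $F_*$ kills the vertical part, one obtains $F_*[X,Y]^{\mathcal{H}}_p = [X_*,Y_*]_{F(p)}$, so $[X,Y]^{\mathcal{H}}$ is horizontal and $F$-related to $[X_*,Y_*]$, hence basic.

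For (d), I would test $(\nabla^1_X Y)^{\mathcal{H}}$ against an arbitrary basic $Z$ projecting to $Z_*$ and compare Koszul's formula for $2g_1(\nabla^1_X Y, Z)$ with the analogous expression for $2g_2(\nabla^2_{X_*} Y_*, Z_*)$. Each metric term on $M_1$ equals the corresponding one on $M_2$ composed with $F$ by (a), and the derivative along a basic field pushes through as $X(h \circ F) = (X_* h) \circ F$ because $F_*X = X_* \circ F$. For the Lie-bracket terms, (b) identifies $[X,Y]^{\mathcal{H}}$, $[Y,Z]^{\mathcal{H}}$, $[Z,X]^{\mathcal{H}}$ with the basic lifts of $[X_*,Y_*]$, $[Y_*,Z_*]$, $[Z_*,X_*]$, while their vertical parts drop out when paired against the horizontal fields $X, Y, Z$. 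Matching term by term shows $(\nabla^1_X Y)^{\mathcal{H}}$ is $F$-related to $\nabla^2_{X_*} Y_*$, and in particular basic.

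The main obstacle is the bookkeeping in (d): one must verify that every ingredient of Koszul's formula on $M_1$ projects cleanly to its counterpart on $M_2$, invoking (c) to discard vertical--horizontal brackets once they are paired with a horizontal field via the metric. Once this matching is carried out, the four assertions follow without further work.
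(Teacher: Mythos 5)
Your argument is correct: polarization of (S2) gives (a), naturality of the Lie bracket under $F$-related fields gives (b) and (c), and the Koszul-formula comparison (using that basic fields span the horizontal space pointwise and that vertical components of brackets are killed when paired with horizontal fields) gives (d). The paper does not prove this lemma but simply recalls it from O'Neill \cite{O'Neill}, and your proof is essentially the standard one found there, so there is nothing to flag.
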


 The geometry of
Riemannian submersions is characterized by O'Neill's tensors
$\mathcal{T}$ and $\mathcal{A}$ defined for vector fields $E, F$
on $M_1$ by
\begin{equation}
\mathcal{A}_E
F=\mathcal{H}\nabla^{^1}_{\mathcal{H}E}\mathcal{V}F+\mathcal{V}\nabla^{^1}_{\mathcal{H}E}\mathcal{H}F
\label{eq:2.3}
\end{equation}
\begin{equation}
\mathcal{T}_E
F=\mathcal{H}\nabla^{^1}_{\mathcal{V}E}\mathcal{V}F+\mathcal{V}\nabla^{^1}_{\mathcal{V}E}\mathcal{H}F.
\label{eq:2.4}
\end{equation}
It is easy to see that a Riemannian submersion
$F:M_1\longrightarrow M_2$ has totally geodesic fibers if and only
if $\mathcal{T}$ vanishes identically. For any $E \in
\Gamma(TM_1)$, $\mathcal{T}_E$ and $\mathcal{A}_E$ are
skew-symmetric operators on $(\Gamma(TM_1),g)$ reversing the
horizontal and the vertical distributions. It is also easy to see
that $\mathcal{T}$ is vertical,
$\mathcal{T}_E=\mathcal{T}_{\mathcal{V}E}$ and $\mathcal{A}_E$ is
horizontal, $\mathcal{A}_E=\mathcal{A}_{\mathcal{H}E}$. We note
that the tensor fields $\mathcal{T}$ and $\mathcal{A}$ satisfy
\begin{eqnarray}
\mathcal{T}_UW&=&\mathcal{T}_WU,\, \forall U,W \in \Gamma(ker F_*)\label{eq:2.5}\\
\mathcal{A}_XY&=&-\mathcal{A}_YX=\frac{1}{2}\mathcal{V}[X,Y],\,
\forall X,Y\in \Gamma((ker F_*)^{\perp}).\label{eq:2.6}
\end{eqnarray}
On the other hand, from (\ref{eq:2.3}) and (\ref{eq:2.4}) we have
\begin{eqnarray}
\nabla^{^1}_VW&=&\mathcal{T}_VW+\hat{\nabla}_V W \label{eq:2.7}\\
\nabla^{^1}_VX&=&\mathcal{H}\nabla^{^1}_VX+\mathcal{T}_VX \label{eq:2.8}\\
\nabla^{^1}_XV&=&\mathcal{A}_XV+\mathcal{V}\nabla^{^1}_XV \label{eq:2.9}\\
\nabla^{^1}_XY&=&\mathcal{H}\nabla^{^1}_XY+\mathcal{A}_XY
\label{eq:2.10}
\end{eqnarray}
for $X,Y \in \Gamma((ker F_*)^{\perp})$ and $V,W \in \Gamma(ker
F_*)$, where $\hat{\nabla}_VW=\mathcal{V}\nabla^{^1}_VW$. If $X$
is
basic, then $\mathcal{H}\nabla^{^1}_VX=\mathcal{A}_XV$.\\

Finally, we recall the notion of the second fundamental form of a
map between Riemannian manifolds. Let $(M_1, g_1)$ and $(M_2, g_2)$
be Riemannian manifolds and suppose that $\varphi:
M_1\longrightarrow M_2$ is a smooth map between them. Then the
differential $\varphi_{*}$ of $\varphi$ can be viewed as a section
of the bundle $Hom(TM_1,\, \varphi^{-1}TM_2)\, \longrightarrow \,
M_1$, where $\varphi^{-1}TM_2$ is the pullback bundle which has
fibers $(\varphi^{-1}TM_2)_p=T_{\varphi(p)} M_2, p \in M_1.$
$Hom(TM_1, \varphi^{-1}TM_2)$ has a connection $\nabla$ induced from
the Levi-Civita connection $\nabla^{^1}$ and the pullback
connection. Then the second fundamental form of $\varphi$ is given
by
\begin{equation}
(\nabla \varphi_{*})(X, Y)=\nabla^{\varphi}_X
\varphi_{*}(Y)-\varphi_{*}(\nabla^{^1}_X Y) \label{eq:2.11}
\end{equation}
for $X, Y \in \Gamma(TM_1)$, where $\nabla^{\varphi}$ is the
pullback connection. It is known that the second fundamental form is
symmetric.\\

\section{Semi-invariant Riemannian submersions}\label{sct3}

In this section, we define semi-invariant Riemannian submersions
from an almost Hermitian manifold onto a Riemannian manifold,
investigate the integrability of distributions and obtain a
necessary and sufficient condition for such submersions to be
totally geodesic map. We also obtain two decomposition theorems for
the total manifolds of such submersions.

\begin{definition}Let $M_1$ be a complex $m-$
dimensional almost Hermitian manifold with Hermitian metric $g_1$
and almost complex structure $J$ and $M_2$ be a Riemannian manifold
with Riemannian metric $g_2$. A Riemannian submersion $F:M_1
\longrightarrow M_2$ is called semi-invariant Riemannian submersion
if there is a distribution $\mathcal{D}_1 \subseteq ker F_*$  such that
\begin{equation}
ker F_*=\mathcal{D}_1 \oplus \mathcal{D}_2 \label{eq:3.1}
\end{equation} and
\begin{equation}
J(\mathcal{D}_1)=\mathcal{D}_1,\,J(\mathcal{D}_2)\subseteq (ker
F_*)^\perp, \label{eq:3.2}
\end{equation}
where $\mathcal{D}_2$ is orthogonal complementary to $\mathcal{D}_1$ in $ker F_*$.
\end{definition}

We note that it is known that the distribution $ker F_*$ is
integrable. Hence, above definition implies that the integral
manifold (fiber) $F^{-1}(q)$, $q\in M_2$, of $ker F_*$ is a
CR-submanifold of $M_1$. For CR-submanifolds, see: \cite{Bejancu},
\cite{Chen} and \cite{Yano-Kon2}. We now give some
examples of semi-invariant Riemannian submersions.\\

\noindent{\bf Example~1.~}Every anti-invariant Riemannian submersion
from an almost Hermitian manifold onto a Riemannian manifold is a
semi-invariant Riemannian submersion with
$\mathcal{D}_1=\{0\}.$\\

\noindent{\bf Example 2.~}Every Hermitian submersion from an
almost Hermitian manifold onto an almost Hermitian manifold is a
semi-invariant submersion with $\mathcal{D}_2=\{0\}$.\\

\noindent{\bf Example~3.~} Let $F$ be a submersion defined by
$$
\begin{array}{cccc}
  F: & R^6             & \longrightarrow & R^3\\
     & (x_1,x_2,x_3,x_4,x_5,x_6) &             & (\frac{x_1 + x_2}{\sqrt{2}},
     \frac{x_3+x_5}{\sqrt{2}}, \frac{x_4+x_6}{\sqrt{2}}).
\end{array}
$$
Then it follows that
$$ker F_*=span\{V_1=-\partial x_1+\partial x_2, V_2=-\partial x_3+\partial
x_5, V_3=-\partial x_4+\partial x_6\}$$ and
$$(ker F_*)^\perp=span\{X_1=\partial x_1+\partial x_2, X_2=\partial x_3+\partial
x_5, X_3=\partial x_4+\partial x_6\}.$$ Hence we have $JV_2=V_3$
and $JV_1=-X_1$. Thus it follows that
$\mathcal{D}_1=span\{V_2,V_3\}$ and $\mathcal{D}_2=span\{V_1\}$.
Moreover one can see that $\mu=span \{X_2,X_3\}$. By direct
computations, we also have
$$
g_{R^6}(JV_1,JV_1)=g_{R^3}(F_*(JV_1),F_*(JV_1)),\,g_{R^6}(X_2,X_2)=g_{R^3}(F_*(X_2),F_*(X_2))$$
and $$g_{R^6}(X_3,X_3)=g_{R^3}(F_*(X_3),F_*(X_3)),
$$
which show that $F$ is a Riemannian submersion. Thus $F$ is a
semi-invariant Riemannian submersion.\\

We now investigate the integrability of the distributions
$\mathcal{D}_1$ and $\mathcal{D}_2$. Since fibers of semi-invariant
submersions from K\"{a}hler manifolds are CR-submanifolds and
$\mathcal{T}$ is the second fundamental form of the fibers, the
following results can be deduced from Theorem 1.1 of
\cite[p.39]{Bejancu}.

\begin{lem}\label{Lemma 3.2}Let $F$ be a semi-invariant Riemannian submersion from a
K\"{a}hler manifold $(M_1,g_1,J_1)$ onto a Riemannian manifold
$(M_2,g_2 )$. Then
\begin{enumerate}[(i)]
    \item the distribution $\mathcal{D}_2$ is always integrable.
    \item The distribution $\mathcal{D}_1$ is integrable if and
    only if
    $$g_1(T_XJY-T_YJX,JZ)=0$$
    for $X, Y\in \Gamma(\mathcal{D}_1)$ and $Z \in \Gamma(\mathcal{D}_2)$.
\end{enumerate}
\end{lem}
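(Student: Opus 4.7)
Since $\ker F_*$ is the tangent distribution of the fibers it is integrable, so the bracket of two vertical vector fields is again vertical. Hence a sub-distribution $\mathcal{D}$ of $\ker F_*$ with orthogonal complement $\mathcal{D}'$ inside $\ker F_*$ is integrable if and only if $g_1([U,W],Z)=0$ for all $U,W\in\Gamma(\mathcal{D})$ and $Z\in\Gamma(\mathcal{D}')$. My plan is to transport each such inner product to the horizontal distribution by the Hermitian identity $g_1(A,B)=g_1(JA,JB)$ together with the K\"ahler relation $\nabla^{^1}_{E}JF=J\nabla^{^1}_{E}F$, and then to use the O'Neill decompositions (\ref{eq:2.7})--(\ref{eq:2.8}).

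\textbf{Part (ii).} For $X,Y\in\Gamma(\mathcal{D}_1)$ and $Z\in\Gamma(\mathcal{D}_2)$ the vector $JY$ is vertical (it still lies in $\mathcal{D}_1$) while $JZ$ is horizontal. Hermitian compatibility and K\"ahler give
\[
g_1(\nabla^{^1}_XY,Z)=g_1(J\nabla^{^1}_XY,JZ)=g_1(\nabla^{^1}_XJY,JZ).
\]
Since $X$ and $JY$ are both vertical, (\ref{eq:2.7}) provides $\nabla^{^1}_XJY=\hat\nabla_XJY+\mathcal{T}_XJY$; the $\hat\nabla$-summand is vertical and drops out against the horizontal $JZ$, leaving $g_1(\nabla^{^1}_XY,Z)=g_1(\mathcal{T}_XJY,JZ)$. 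Subtracting the same relation with $X$ and $Y$ swapped gives $g_1([X,Y],Z)=g_1(\mathcal{T}_XJY-\mathcal{T}_YJX,JZ)$, which is exactly the criterion in~(ii).

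\textbf{Part (i) and main obstacle.} For $U,V\in\Gamma(\mathcal{D}_2)$ and $Z\in\Gamma(\mathcal{D}_1)$ the situation is dual: $JU$ and $JV$ are horizontal while $JZ$ is vertical, so the relevant decomposition is now (\ref{eq:2.8}). The same manipulation produces $g_1([U,V],Z)=g_1(\mathcal{T}_UJV-\mathcal{T}_VJU,JZ)$. Applying skew-symmetry of $\mathcal{T}_U$ together with the symmetry (\ref{eq:2.5}) (both $U$ and $JZ$ are vertical, so $\mathcal{T}_UJZ=\mathcal{T}_{JZ}U$), this rewrites as
\[
g_1(JU,\mathcal{T}_{JZ}V)-g_1(JV,\mathcal{T}_{JZ}U),
\]
and the remaining task, which I expect to be the main obstacle, is the auxiliary symmetry $g_1(\mathcal{T}_{JZ}U,JV)=g_1(\mathcal{T}_{JZ}V,JU)$. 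My plan is to prove it by writing $\mathcal{T}_{JZ}U=\mathcal{H}\nabla^{^1}_{JZ}U$, using Hermitian compatibility and K\"ahler to move $J$ from the second slot across the connection onto $U$, applying (\ref{eq:2.8}) to extract the $\mathcal{T}_{JZ}JU$ component, and finishing with one more use of skew-symmetry of $\mathcal{T}_{JZ}$. An alternative, as the author hints, is to appeal to Theorem~1.1 of Bejancu: each fiber is a CR-submanifold of the K\"ahler manifold $M_1$, and the restriction of $\mathcal{T}$ to vertical arguments coincides with the second fundamental form of the fiber, so both conclusions follow from the standard CR result.
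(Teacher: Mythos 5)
Your proof is correct, but it takes a different route from the paper: the paper gives no computation at all, merely observing that each fiber is a CR-submanifold of $M_1$ with second fundamental form $\mathcal{T}|_{\ker F_*\times\ker F_*}$ and citing Theorem~1.1 of Bejancu, whereas you give a self-contained verification entirely in the submersion formalism. Your part (ii) is a clean, complete derivation. For part (i), the one step you leave as a ``plan'' --- the symmetry $g_1(\mathcal{T}_{JZ}U,JV)=g_1(\mathcal{T}_{JZ}V,JU)$ for $U,V\in\Gamma(\mathcal{D}_2)$, $Z\in\Gamma(\mathcal{D}_1)$ --- does go through exactly as you describe: since $JZ$ and $U$ are both vertical, (\ref{eq:2.7}) gives $g_1(\mathcal{T}_{JZ}U,JV)=g_1(\nabla^{^1}_{JZ}U,JV)$; then (\ref{eq:2.1}), (\ref{eq:2.2}) and $J^2=-I$ turn this into $-g_1(\nabla^{^1}_{JZ}JU,V)$; since $JU$ is horizontal and $V$ is vertical, (\ref{eq:2.8}) reduces this to $-g_1(\mathcal{T}_{JZ}JU,V)$, and skew-symmetry of $\mathcal{T}_{JZ}$ gives $g_1(JU,\mathcal{T}_{JZ}V)$, as required. (This is precisely the submersion analogue of the shape-operator identity $A_{JU}V=A_{JV}U$ that underlies the Blair--Chen/Bejancu integrability theorem for the totally real distribution.) The trade-off is the usual one: the citation is shorter and places the lemma in its CR context, while your computation is verifiable without leaving the paper and makes explicit which O'Neill identities --- (\ref{eq:2.5}), (\ref{eq:2.7}), (\ref{eq:2.8}) and the skew-symmetry of $\mathcal{T}_E$ --- are actually being used.
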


 Let $F$ be a
semi-invariant Riemannian submersion from a K\"{a}hler manifold
$(M_1,g_1,J)$ onto a Riemannian manifold $(M_2,g_2)$. We denote
the complementary distribution to $J\mathcal{D}_2$ in $(ker
F_*)^\perp$ by $\mu$. Then for $V \in \Gamma(ker F_*)$, we write
\begin{equation}
JV=\phi V+\omega V, \label{eq:3.3}
\end{equation}
where $\phi V \in \Gamma(\mathcal{D}_1)$ and $ \omega V \in
\Gamma(J\mathcal{D}_2)$. Also for $X \in \Gamma((ker F_*)^\perp)$,
we have
\begin{equation}
JX=\mathcal{B}X+\mathcal{C}X, \label{eq:3.4}
\end{equation}
where $\mathcal{B}X \in \Gamma(\mathcal{D}_2)$ and $\mathcal{C}X \in
\Gamma(\mu)$. Then, by using (\ref{eq:3.3}), (\ref{eq:3.4}),
(\ref{eq:2.7}) and (\ref{eq:2.8}) we get
\begin{eqnarray}
(\nabla_V \phi )W&=&\mathcal{B}\mathcal{T}_VW-\mathcal{T}_V \omega
W
\label{eq:3.5}\\
(\nabla_V
 \omega )W&=&\mathcal{C}\mathcal{T}_VW-\mathcal{T}_V\phi W,
\label{eq:3.6}
\end{eqnarray}
for $V, W \in \Gamma(ker F_*)$, where
$$ (\nabla_V
\phi )W=\hat{\nabla}_V\phi W-\phi \hat{\nabla}_VW$$ and
$$(\nabla_V
 \omega )W=\mathcal{H}\nabla^{^1}_V \omega W- \omega
 \hat{\nabla}_VW.$$

The proof of the following proposition can be deduced from Theorem
5.1 of \cite[p.63]{Bejancu}.

\begin{prop}\label{Proposition 3.3}Let $F$ be a
semi-invariant Riemannian submersion from a K\"{a}hler manifold
$(M_1,g_1,J)$ onto a Riemannian manifold $(M_2,g_2)$. Then the
fibers of $F$ are locally product Riemannian manifolds if and only
if $(\nabla_V\phi )W=0$ for $V, W \in \Gamma(ker F_*)$.
\end{prop}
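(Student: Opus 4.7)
The plan is to view each fiber $F^{-1}(q)$ as a CR-submanifold of the K\"ahler manifold $M_1$ (as noted just after the definition of semi-invariant submersion), carrying the two complementary orthogonal distributions $\mathcal{D}_1$ (invariant) and $\mathcal{D}_2$ (anti-invariant). A de Rham-type criterion says that the fiber is locally the Riemannian product of an integral manifold of $\mathcal{D}_1$ with one of $\mathcal{D}_2$ precisely when both distributions are integrable with totally geodesic leaves inside the fiber, i.e., $\hat{\nabla}_V W \in \Gamma(\mathcal{D}_i)$ whenever $V, W \in \Gamma(\mathcal{D}_i)$ for $i = 1, 2$. Since Lemma \ref{Lemma 3.2}(i) already guarantees integrability of $\mathcal{D}_2$, the task reduces to establishing that $(\nabla_V \phi)W = 0$ for all $V, W \in \Gamma(\ker F_*)$ is equivalent to these geometric properties.

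To carry this out I would split the identity
\[
(\nabla_V \phi) W = \hat{\nabla}_V \phi W - \phi \hat{\nabla}_V W
\]
into four cases according to which of $\mathcal{D}_1$, $\mathcal{D}_2$ each of $V$ and $W$ lies in. The key observation is that $\phi|_{\mathcal{D}_1} = J$ while $\phi|_{\mathcal{D}_2} = 0$, and that for any vertical $U$ the vector $\phi U$ is the $\mathcal{D}_1$-component of $JU$, so $\phi U = 0$ if and only if $U \in \Gamma(\mathcal{D}_2)$. The cases $V, W \in \Gamma(\mathcal{D}_2)$ and $V \in \Gamma(\mathcal{D}_1), W \in \Gamma(\mathcal{D}_2)$ reduce the vanishing to $\phi \hat{\nabla}_V W = 0$, which forces $\hat{\nabla}_V W \in \Gamma(\mathcal{D}_2)$; this is exactly the totally geodesic property of $\mathcal{D}_2$ in the fiber.

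For the remaining cases $V \in \Gamma(\mathcal{D}_2), W \in \Gamma(\mathcal{D}_1)$ and $V, W \in \Gamma(\mathcal{D}_1)$, one decomposes $\hat{\nabla}_V W = P + Q$ with $P \in \Gamma(\mathcal{D}_1)$, $Q \in \Gamma(\mathcal{D}_2)$, and applies the K\"ahler identity $\bar{\nabla} J = 0$ together with (\ref{eq:2.7}) to compute the vertical part of $\bar{\nabla}_V JW$. Matching this with $\phi \hat{\nabla}_V W = JP$ forces $Q = 0$, giving totally geodesic leaves for $\mathcal{D}_1$; integrability of $\mathcal{D}_1$ then follows from $[V, W] = \hat{\nabla}_V W - \hat{\nabla}_W V$ now lying in $\mathcal{D}_1$. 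Each step is reversible, so the converse direction is obtained by running the same cases backwards. Alternatively, as the statement itself indicates, one may simply invoke Theorem 5.1 of \cite{Bejancu}, once the dictionary between his $P$ tensor and our $\phi$, and between his induced submanifold connection and our $\hat{\nabla}$, is fixed.

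The main obstacle lies in the case analysis of the previous paragraph: cleanly extracting the $\mathcal{D}_1$, $\mathcal{D}_2$, and $(\ker F_*)^{\perp}$ components of $\bar{\nabla}_V JW$ without confusing the ambient connection $\bar{\nabla}$, the fiber connection $\hat{\nabla}$, and the horizontal contribution governed by $\mathcal{T}$ in (\ref{eq:2.7}). Once this bookkeeping is handled, the argument is essentially a translation of the classical CR-product theorem to the submersion setting, with the role of the submanifold's induced connection played by the vertical connection $\hat{\nabla}$ on the fibers.
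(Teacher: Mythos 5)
Your proposal is correct and takes essentially the same route as the paper: the fibers are CR-submanifolds of $M_1$ with induced connection $\hat{\nabla}$ and second fundamental form $\mathcal{T}$, and the statement is precisely the Chen--Bejancu characterization of CR-products by parallelism of the tangential part of $J$ (Theorem 5.1 of \cite{Bejancu}, which is all the paper invokes), so your case analysis merely supplies the verification the paper delegates to that citation. The only step worth making explicit is that in the case $V,W\in\Gamma(\mathcal{D}_1)$ the vanishing of $(\nabla_V\phi)W=\mathcal{B}\mathcal{T}_VW$ directly yields $\hat{\nabla}_V(JW)\in\Gamma(\mathcal{D}_1)$ rather than $\hat{\nabla}_VW\in\Gamma(\mathcal{D}_1)$, so one should replace $W$ by $JW$ (using $J^2=-I$ and $J\mathcal{D}_1=\mathcal{D}_1$) to conclude that $Q=0$.
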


We now obtain necessary and sufficient conditions for a
semi-invariant submersion to be totally geodesic. We recall that a
differentiable map $F$ between Riemannian manifolds $(M_1,g_1)$ and
$(M_2,g_2)$ is called a totally geodesic map if $(\nabla
F_*)(X,Y)=0$ for all $X, Y \in \Gamma(TM_1)$.

\begin{thrm}\label{Theorem 3.4}Let $F$ be a semi-invariant submersion from a K\"{a}hler
manifold $(M_1,g_1,J)$  onto a Riemannian manifold $(M_2,g_2)$. Then
$F$ is a totally geodesic map if and only if
\begin{enumerate}[(a)]
    \item $\hat{\nabla}_X\phi Y+\mathcal{T}_X\omega Y$ and
$\hat{\nabla}_X\mathcal{B}Z+\mathcal{T}_X\mathcal{C}Z$ belong to
$\mathcal{D}_1$.
    \item $\mathcal{H}\nabla^{^1}_X\omega Y+T_X \phi Y$ and
    $\mathcal{T}_X\mathcal{B}Z+\mathcal{H}\nabla^{^1}_X\mathcal{C}Z$ belong to $J\mathcal{D}_2$
\end{enumerate} for $X,Y \in \Gamma(ker F_*)$ and $Z \in \Gamma((ker
  F_*)^\perp)$.
  \end{thrm}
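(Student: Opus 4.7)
The second fundamental form $\nabla F_{*}$ is symmetric and $C^{\infty}(M_{1})$-bilinear, so it suffices to verify $(\nabla F_{*})(X,Y)=0$ in three cases: both horizontal, both vertical, and mixed.

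If both $Z,W\in\Gamma((\ker F_{*})^{\perp})$ are basic, then Lemma~\ref{Oneil}(d) together with the definition of the pullback connection give $F_{*}(\nabla^{^1}_{Z}W)=\nabla^{^2}_{Z_{*}}W_{*}\circ F=\nabla^{F}_{Z}F_{*}(W)$, so $(\nabla F_{*})(Z,W)=0$; tensoriality extends this to all horizontal pairs. This is why the statement records no condition for two horizontal arguments.

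For $X,Y\in\Gamma(\ker F_{*})$ one has $(\nabla F_{*})(X,Y)=-F_{*}(\nabla^{^1}_{X}Y)$. The key step is to invoke the K\"{a}hler identity in the form $\nabla^{^1}_{X}Y=-J\nabla^{^1}_{X}(JY)$ (using $J^{2}=-I$ and (\ref{eq:2.2})). Decomposing $JY=\phi Y+\omega Y$ via (\ref{eq:3.3}) and applying (\ref{eq:2.7}), (\ref{eq:2.8}) to the two summands produces $\nabla^{^1}_{X}(JY)=A+B$ with
$$A=\hat{\nabla}_{X}\phi Y+\mathcal{T}_{X}\omega Y\in\Gamma(\ker F_{*}),\qquad B=\mathcal{T}_{X}\phi Y+\mathcal{H}\nabla^{^1}_{X}\omega Y\in\Gamma((\ker F_{*})^{\perp}).$$
Applying $-J$ and splitting via (\ref{eq:3.3})--(\ref{eq:3.4}) identifies the horizontal component of $\nabla^{^1}_{X}Y$ as $-(\omega A+\mathcal{C}B)$, where $\omega A\in\Gamma(J\mathcal{D}_{2})$ and $\mathcal{C}B\in\Gamma(\mu)$. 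Since these are orthogonal summands of $(\ker F_{*})^{\perp}$ and $F_{*}$ is injective on horizontal vectors, the vanishing of $F_{*}(\nabla^{^1}_{X}Y)$ is equivalent to the two independent conditions $A\in\mathcal{D}_{1}$ and $B\in J\mathcal{D}_{2}$, which are precisely the first clauses of (a) and (b).

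The mixed case $X\in\Gamma(\ker F_{*})$, $Z\in\Gamma((\ker F_{*})^{\perp})$ is handled by the parallel trick $Z=-J(JZ)$: decompose $JZ=\mathcal{B}Z+\mathcal{C}Z$ via (\ref{eq:3.4}) and apply (\ref{eq:2.7}), (\ref{eq:2.8}). The same $-J$-and-split procedure yields the horizontal part of $\nabla^{^1}_{X}Z$ as $-(\omega A'+\mathcal{C}B')$ with $A'=\hat{\nabla}_{X}\mathcal{B}Z+\mathcal{T}_{X}\mathcal{C}Z$ and $B'=\mathcal{T}_{X}\mathcal{B}Z+\mathcal{H}\nabla^{^1}_{X}\mathcal{C}Z$, producing the second clauses of (a) and (b). The only real obstacle is careful bookkeeping: choosing the correct O'Neill formula from (\ref{eq:2.7})--(\ref{eq:2.10}) for each derivative according to the horizontal/vertical type of its arguments, and using the splitting $(\ker F_{*})^{\perp}=J\mathcal{D}_{2}\oplus\mu$ to decouple the horizontal vanishing into two independent conditions.
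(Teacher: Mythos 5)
Your proposal is correct and follows essentially the same route as the paper: the Kähler identity $\nabla^{^1}_XY=-J\nabla^{^1}_X(JY)$, the decompositions (\ref{eq:3.3})--(\ref{eq:3.4}), the O'Neill formulas (\ref{eq:2.7})--(\ref{eq:2.8}), and the orthogonal splitting $(\ker F_*)^{\perp}=J\mathcal{D}_2\oplus\mu$ to decouple the two conditions. The only (harmless) difference is that you make explicit the equivalences $\omega A=0\Leftrightarrow A\in\mathcal{D}_1$ and $\mathcal{C}B=0\Leftrightarrow B\in J\mathcal{D}_2$, which the paper leaves implicit when passing from (\ref{eq:3.9})--(\ref{eq:3.10}) to the stated conclusion.
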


\begin{proof} First of all, since $F$ is a Riemannian
submersion we have
\begin{equation}
(\nabla F_*)(Z_1,Z_2)=0, \forall Z_1,Z_2 \in \Gamma((ker
F_*)^\perp).\label{eq:3.8}
\end{equation}
 For $X, Y \in \Gamma(ker F_*)$, we get $(\nabla
 F_*)(X,Y)=-F_*(\nabla^{^1}_XY)$. Then from (\ref{eq:2.2}) we get
 $(\nabla F_*)(X,Y)=F_*(J\nabla^{^1}_XJY)$. Using (\ref{eq:3.3}) we have
 $(\nabla F_*)(X,Y)=F_*(J\nabla^{^1}_X\phi Y+J\nabla^{^1}_X \omega Y)$. Then
 from (\ref{eq:2.7}) and (\ref{eq:2.8}) we arrive at
$$(\nabla F_*)(X,Y)=F_*(J(\hat{\nabla}_X\phi Y+\mathcal{T}_X \phi Y+\mathcal{H}\nabla^{^1}_X \omega Y+\mathcal{T}_X \omega Y)).$$
Using (\ref{eq:3.3}) and (\ref{eq:3.4}) in above equation we obtain
\begin{eqnarray}
(\nabla F_*)(X,Y)&=&F_*(\phi \hat{\nabla}_X\phi
Y+\omega\hat{\nabla}_X\phi Y+\mathcal{B}\mathcal{T}_X \phi
Y\nonumber\\
&+&\mathcal{C}\mathcal{T}_X \phi
Y+\mathcal{B}\mathcal{H}\nabla^{^1}_X
\omega Y+\mathcal{C}\mathcal{H}\nabla^{^1}_X \omega Y\nonumber\\
&+&\phi \mathcal{T}_X \omega Y+\omega \mathcal{T}_X \omega
Y).\nonumber
\end{eqnarray}
Since $\phi \hat{\nabla}_X\phi Y+\mathcal{B}\mathcal{T}_X \phi
Y+\phi \mathcal{T}_X \omega Y+\mathcal{B}\mathcal{H}\nabla^{^1}_X
\omega Y \in \Gamma(ker F_*)$, we derive
\begin{eqnarray}
(\nabla F_*)(X,Y)&=&F_*(\omega\hat{\nabla}_X\phi
Y+\mathcal{C}\mathcal{T}_X \phi Y\nonumber\\
&+&\mathcal{C}\mathcal{H}\nabla^{^1}_X \omega Y+\omega
\mathcal{T}_X \omega Y).\nonumber
\end{eqnarray}
Then, since $F$ is a linear isometry between $(ker F_*)^\perp$ and
$TM_2$, $(\nabla F_*)(X,Y)=0$ if and only if
$\omega\hat{\nabla}_X\phi Y+\mathcal{C}\mathcal{T}_X \phi
Y+\mathcal{C}\mathcal{H}\nabla^{^1}_X \omega Y+\omega
\mathcal{T}_X \omega Y=0$. Thus $(\nabla F_*)(X,Y)=0$ if and only
if
\begin{equation}
\omega(\hat{\nabla}_X\phi Y+ \mathcal{T}_X \omega Y)=0,\,
\mathcal{C}(\mathcal{T}_X \phi Y+\mathcal{H}\nabla^{^1}_X \omega
Y)=0. \label{eq:3.9}
\end{equation}
In a similar way, for $X \in \Gamma(ker F_*)$ and $Z \in \Gamma((ker
F_*)^\perp)$, $(\nabla F_*)(X,Z)=0$ if and only if
\begin{equation}
\omega(\hat{\nabla}_X\mathcal{B} Z+ \mathcal{T}_X
\mathcal{C}Z)=0,\, \mathcal{C}(\mathcal{T}_X
\mathcal{B}Z+\mathcal{H}\nabla^{^1}_X \mathcal{C}Z)=0.
\label{eq:3.10}
\end{equation}
Then proof follows from (\ref{eq:3.8})-(\ref{eq:3.10}).
\end{proof}

We now investigate the geometry of leaves of the  distribution $(ker
F_*)^\perp$.

\begin{prop}\label{Proposition 3.5}Let $F$ be a
semi-invariant submersion from a K\"{a}hler manifold $(M_1,g_1,J)$
onto a Riemannian manifold $(M_2,g_2)$. Then the distribution $(ker
F_*)^\perp$ defines a totally geodesic foliation if and only if
$$\mathcal{A}_{Z_1}\mathcal{B}Z_2+\mathcal{H}\nabla^{^1}_{Z_1}\mathcal{C}Z_2
\in
\Gamma(\mu),\,\mathcal{A}_{Z_1}\mathcal{C}Z_2+\mathcal{V}\nabla^{^1}_{Z_1}Z_2\in
\Gamma(\mathcal{D}_2)$$ for $Z_1,Z_2 \in \Gamma((ker F_*)^\perp)$.
\end{prop}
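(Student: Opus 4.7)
The plan is to reduce the totally geodesic condition on $(\ker F_*)^\perp$ to the orthogonality $g_1(\nabla^{^1}_{Z_1}Z_2, U) = 0$ for all horizontal $Z_1, Z_2$ and vertical $U$, and then to exploit the K\"ahler identity $\nabla^{^1} J = 0$ to shift the derivative across $J$ so that the decomposition $JZ_2 = \mathcal{B}Z_2 + \mathcal{C}Z_2$ from (\ref{eq:3.4}), together with the O'Neill formulas (\ref{eq:2.9}) and (\ref{eq:2.10}), becomes available.

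First I would observe that a distribution defines a totally geodesic foliation exactly when it is autoparallel, so the claim reduces to showing $\nabla^{^1}_{Z_1}Z_2 \in \Gamma((\ker F_*)^\perp)$ for every $Z_1, Z_2 \in \Gamma((\ker F_*)^\perp)$. Since $\ker F_* = \mathcal{D}_1 \oplus \mathcal{D}_2$, this can be tested componentwise: separately against $U \in \Gamma(\mathcal{D}_1)$ and against $U \in \Gamma(\mathcal{D}_2)$.

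Next I would compute, for any vertical $U$, using the Hermitian and K\"ahler identities,
$$g_1(\nabla^{^1}_{Z_1}Z_2, U) \;=\; g_1(J\nabla^{^1}_{Z_1}Z_2, JU) \;=\; g_1(\nabla^{^1}_{Z_1}JZ_2, JU).$$
Writing $JZ_2 = \mathcal{B}Z_2 + \mathcal{C}Z_2$ with $\mathcal{B}Z_2 \in \Gamma(\mathcal{D}_2)$ vertical and $\mathcal{C}Z_2 \in \Gamma(\mu)$ horizontal, applying (\ref{eq:2.9}) to the first summand and (\ref{eq:2.10}) to the second, and using that $\mathcal{A}_{Z_1}$ interchanges horizontal and vertical, the horizontal part of $\nabla^{^1}_{Z_1}JZ_2$ comes out as $\mathcal{A}_{Z_1}\mathcal{B}Z_2 + \mathcal{H}\nabla^{^1}_{Z_1}\mathcal{C}Z_2$ while its vertical part equals $\mathcal{V}\nabla^{^1}_{Z_1}\mathcal{B}Z_2 + \mathcal{A}_{Z_1}\mathcal{C}Z_2$.

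Finally I would specialize $U$: when $U \in \Gamma(\mathcal{D}_2)$ the image $JU$ lies in $J\mathcal{D}_2 \subseteq (\ker F_*)^\perp$, so only the horizontal part pairs nontrivially with $JU$, and the vanishing condition is equivalent to $\mathcal{A}_{Z_1}\mathcal{B}Z_2 + \mathcal{H}\nabla^{^1}_{Z_1}\mathcal{C}Z_2 \in \Gamma(\mu)$; when $U \in \Gamma(\mathcal{D}_1)$ the image $JU$ remains in $\mathcal{D}_1 \subseteq \ker F_*$, so only the vertical part contributes, and the vanishing condition is equivalent to $\mathcal{V}\nabla^{^1}_{Z_1}\mathcal{B}Z_2 + \mathcal{A}_{Z_1}\mathcal{C}Z_2 \in \Gamma(\mathcal{D}_2)$. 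Assembling these two statements yields the required characterization. The main effort is purely bookkeeping: correctly tracking which summand of $JZ_2$ produces a horizontal versus a vertical contribution under (\ref{eq:2.9}) and (\ref{eq:2.10}), so I do not anticipate any genuine obstacle.
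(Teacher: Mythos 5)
Your proposal is correct and is essentially the paper's own argument: both proofs use the K\"ahler identity to move $J$ onto $Z_2$, decompose $JZ_2=\mathcal{B}Z_2+\mathcal{C}Z_2$ via (\ref{eq:3.4}), apply (\ref{eq:2.9}) and (\ref{eq:2.10}), and then isolate the $\mathcal{D}_1$- and $\mathcal{D}_2$-components; the only difference is that you test against $JU$ with an inner product while the paper applies $J$ once more and reads off the $\mathcal{B}$- and $\phi$-parts, which is purely cosmetic. One remark: your second condition comes out as $\mathcal{V}\nabla^{^1}_{Z_1}\mathcal{B}Z_2+\mathcal{A}_{Z_1}\mathcal{C}Z_2\in\Gamma(\mathcal{D}_2)$, which is exactly what the paper's proof derives as well, so the occurrence of $\mathcal{V}\nabla^{^1}_{Z_1}Z_2$ in the printed statement of Proposition \ref{Proposition 3.5} appears to be a typo for $\mathcal{V}\nabla^{^1}_{Z_1}\mathcal{B}Z_2$.
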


\begin{proof}From (\ref{eq:2.1}) and (\ref{eq:2.2})
we have $\nabla^{^1}_{Z_1}Z_2=-J\nabla^{^1}_{Z_1}JZ_2$ for $Z_1,Z_2
\in \Gamma((ker F_*)^\perp)$. Using (\ref{eq:3.4}), (\ref{eq:2.9})
and (\ref{eq:2.10}) we obtain
\begin{eqnarray}
\nabla^{^1}_{Z_1}Z_2&=&-J(A_{Z_1}\mathcal{B}Z_2+\mathcal{V}\nabla^{^1}_{Z_1}\mathcal{B}Z_2)\nonumber\\
               &-&J(\mathcal{H}\nabla^{^1}_{Z_1}\mathcal{C}Z_2+\mathcal{A}_{Z_1}\mathcal{C}Z_2).\nonumber
\end{eqnarray}
Then by using (\ref{eq:3.3}) and (\ref{eq:3.4}) we get
\begin{eqnarray}
\nabla^{^1}_{Z_1}Z_2&=&-\mathcal{B}A_{Z_1}\mathcal{B}Z_2-\mathcal{C}A_{Z_1}\mathcal{B}Z_2+\phi \mathcal{V}\nabla^{^1}_{Z_1}\mathcal{B}Z_2\nonumber\\
               &-&\omega \mathcal{V}\nabla^{^1}_{Z_1}\mathcal{B}Z_2-\mathcal{B}\mathcal{H}\nabla^{^1}_{Z_1}\mathcal{C}Z_2-\mathcal{C}\mathcal{H}\nabla^{^1}_{Z_1}\mathcal{C}Z_2\nonumber\\
               &-&\phi \mathcal{A}_{Z_1}\mathcal{C}Z_2-\omega\mathcal{A}_{Z_1}\mathcal{C}Z_2.\nonumber
\end{eqnarray}
Hence, we have $\nabla^{^1}_{Z_1}Z_2 \in \Gamma((ker F_*)^\perp)$
if and only if
$$-\mathcal{B}A_{Z_1}\mathcal{B}Z_2-\phi
\mathcal{V}\nabla^{^1}_{Z_1}\mathcal{B}Z_2-\mathcal{B}\mathcal{H}\nabla^{^1}_{Z_1}\mathcal{C}Z_2-\phi
\mathcal{A}_{Z_1}\mathcal{C}Z_2=0.$$ Thus $\nabla^{^1}_{Z_1}Z_2
\in \Gamma((ker F_*)^\perp)$ if and only if
$$\mathcal{B}(A_{Z_1}\mathcal{B}Z_2+\mathcal{H}\nabla^{^1}_{Z_1}\mathcal{C}Z_2)=0,\,\phi
(\mathcal{V}\nabla^{^1}_{Z_1}\mathcal{B}Z_2+\mathcal{A}_{Z_1}\mathcal{C}Z_2)=0$$
which completes proof.
\end{proof}

In a similar way, we have the following result.

\begin{prop}\label{Proposition 3.6}Let $F$ be a
semi-invariant Riemannian submersion from a K\"{a}hler manifold
$(M_1,g_1,J)$ onto a Riemannian manifold $(M_2,g_2)$. Then the
distribution $ker F_*$ defines a totally geodesic foliation if and
only if
$$\mathcal{T}_{X_1} \phi X_2+\mathcal{H}\nabla^{^1}_{X_1}\omega X_2 \in
\Gamma(J\mathcal{D}_2),\hat{\nabla}_{X_1}\phi
X_2+\mathcal{T}_{X_1}\omega X_2 \in \Gamma(\mathcal{D}_1)$$ for
$X_1, X_2 \in \Gamma(ker F_*)$.
\end{prop}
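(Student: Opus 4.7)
The plan is to mirror the argument of Proposition~\ref{Proposition 3.5}, only now pushing a vector in $\Gamma(\ker F_*)$ along another such vector, rather than a horizontal–horizontal pair. Concretely, $\ker F_*$ defines a totally geodesic foliation iff $\nabla^{1}_{X_1}X_2\in \Gamma(\ker F_*)$ for all $X_1,X_2\in\Gamma(\ker F_*)$, and by the K\"ahler condition (\ref{eq:2.2}) together with (\ref{eq:2.1}) we may rewrite $\nabla^{1}_{X_1}X_2=-J\nabla^{1}_{X_1}JX_2$. Applying the decomposition (\ref{eq:3.3}) to $JX_2=\phi X_2+\omega X_2$ splits the right-hand side into a term with vertical argument $\phi X_2\in\Gamma(\mathcal{D}_1)$ and a term with horizontal argument $\omega X_2\in\Gamma(J\mathcal{D}_2)$.

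Next, I would apply formulas (\ref{eq:2.7}) and (\ref{eq:2.8}) to these two pieces, obtaining
\[
\nabla^{1}_{X_1}JX_2=\hat{\nabla}_{X_1}\phi X_2+\mathcal{T}_{X_1}\phi X_2+\mathcal{H}\nabla^{1}_{X_1}\omega X_2+\mathcal{T}_{X_1}\omega X_2,
\]
whose vertical part is $\hat{\nabla}_{X_1}\phi X_2+\mathcal{T}_{X_1}\omega X_2$ and whose horizontal part is $\mathcal{T}_{X_1}\phi X_2+\mathcal{H}\nabla^{1}_{X_1}\omega X_2$. I would then feed the vertical part through $J=\phi+\omega$ and the horizontal part through $J=\mathcal{B}+\mathcal{C}$, as done in Proposition~\ref{Proposition 3.5}, and collect the resulting eight terms according to the four components $\mathcal{D}_1$, $\mathcal{D}_2$, $J\mathcal{D}_2$, $\mu$.

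The condition $\nabla^{1}_{X_1}X_2\in\Gamma(\ker F_*)$ is equivalent to the vanishing of the two horizontal components of $-J\nabla^{1}_{X_1}JX_2$, namely
\[
\omega\bigl(\hat{\nabla}_{X_1}\phi X_2+\mathcal{T}_{X_1}\omega X_2\bigr)=0,\qquad \mathcal{C}\bigl(\mathcal{T}_{X_1}\phi X_2+\mathcal{H}\nabla^{1}_{X_1}\omega X_2\bigr)=0.
\]
Since $\omega$ kills exactly $\mathcal{D}_1$ on vertical vectors and $\mathcal{C}$ kills exactly $J\mathcal{D}_2$ on horizontal vectors, these are precisely the two membership statements in the proposition.

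No real obstacle is expected here; the proof is essentially a bookkeeping exercise using (\ref{eq:3.3}), (\ref{eq:3.4}) and the O'Neill formulas. The one point requiring a little care is making sure that when one breaks each horizontal or vertical output into its four sub-blocks, the $\mathcal{D}_1$-part $\phi(\hat{\nabla}_{X_1}\phi X_2+\mathcal{T}_{X_1}\omega X_2)$ and the $\mathcal{D}_2$-part $\mathcal{B}(\mathcal{T}_{X_1}\phi X_2+\mathcal{H}\nabla^{1}_{X_1}\omega X_2)$ already lie inside $\ker F_*$ and hence play no role in the equivalence; only the $J\mathcal{D}_2$- and $\mu$-components have to vanish, which yields exactly the two stated conditions.
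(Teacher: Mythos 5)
Your proof is correct and is exactly the argument the paper intends: the paper omits the proof of Proposition~\ref{Proposition 3.6}, stating only that it follows ``in a similar way'' from the method of Proposition~\ref{Proposition 3.5}, and your computation carries out precisely that adaptation (writing $\nabla^{^1}_{X_1}X_2=-J\nabla^{^1}_{X_1}JX_2$, splitting via (\ref{eq:3.3}), (\ref{eq:2.7}), (\ref{eq:2.8}), and isolating the $J\mathcal{D}_2$- and $\mu$-components). The key bookkeeping points — that the $\phi$- and $\mathcal{B}$-images are already vertical, and that $\omega$ and $\mathcal{C}$ vanish exactly on $\mathcal{D}_1$ and $J\mathcal{D}_2$ respectively — are all handled correctly.
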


From Proposition \ref{Proposition 3.6}, we have the following
result.

\begin{cor}\label{Corollary 3.7}Let $F$ be a semi-invariant
Riemannian submersion from a K\"{a}hler manifold $(M_1,g_1,J)$ onto
a Riemannian manifold $(M_2,g_2)$. Then the distribution $ker F_*$
defines a totally geodesic foliation if and only if
\begin{eqnarray}
g_2((\nabla F_*)(X_1,X_2),F_*(JZ))&=&0,\nonumber\\
g_2((\nabla F_*)(X_1,\omega
X_2),F_*(W))&=&-g_1(\mathcal{T}_{X_1}W,\phi X_2)\nonumber
\end{eqnarray}
for $X_1,X_2 \in \Gamma(ker F_*)$, $Z \in \Gamma(\mathcal{D}_2)$.
and $W \in \Gamma(\mu)$.
\end{cor}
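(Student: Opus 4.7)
The plan is to convert each of the two distributional conditions in Proposition \ref{Proposition 3.6} into an equivalent scalar identity by pairing with a horizontal test field drawn from the appropriate complementary summand. Setting
\[H:=\mathcal{T}_{X_1}\phi X_2+\mathcal{H}\nabla^{1}_{X_1}\omega X_2,\qquad V:=\hat{\nabla}_{X_1}\phi X_2+\mathcal{T}_{X_1}\omega X_2,\]
Proposition \ref{Proposition 3.6} reads $H\in\Gamma(J\mathcal{D}_2)$ and $V\in\Gamma(\mathcal{D}_1)$. Since the decompositions $(\ker F_*)^{\perp}=J\mathcal{D}_2\oplus\mu$ and $\ker F_*=\mathcal{D}_1\oplus\mathcal{D}_2$ are orthogonal, $V\in\Gamma(\mathcal{D}_1)$ is equivalent to $g_1(V,Z)=0$ for all $Z\in\Gamma(\mathcal{D}_2)$, and $H\in\Gamma(J\mathcal{D}_2)$ is equivalent to $g_1(H,W)=0$ for all $W\in\Gamma(\mu)$.

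For the first equation I would mimic the expansion carried out in the proof of Theorem \ref{Theorem 3.4}. The identities $J^2=-I$ and (\ref{eq:2.2}) give $(\nabla F_*)(X_1,X_2)=F_*(J\nabla^{1}_{X_1}JX_2)$; decomposing with (\ref{eq:3.3}), (\ref{eq:3.4}), (\ref{eq:2.7}), (\ref{eq:2.8}) and discarding the vertical summands that $F_*$ annihilates yields $(\nabla F_*)(X_1,X_2)=F_*(\mathcal{C}H+\omega V)$. Pairing with $F_*(JZ)$, $Z\in\Gamma(\mathcal{D}_2)$, the $\mathcal{C}H$-term drops out because $\mathcal{C}H\in\Gamma(\mu)$ is orthogonal to $J\mathcal{D}_2$, and the Riemannian submersion property on horizontal vectors leaves $g_1(\omega V,JZ)$. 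Since $JV=\phi V+\omega V$ with $\phi V\in\Gamma(\mathcal{D}_1)$ orthogonal to $JZ$, identity (\ref{eq:2.1}) collapses this to $g_1(V,Z)$, whose vanishing for every such $Z$ is exactly $V\in\Gamma(\mathcal{D}_1)$.

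For the second equation the core step is the auxiliary identity
\[g_2\bigl((\nabla F_*)(X_1,\omega X_2),F_*(W)\bigr)=-g_1\bigl(\mathcal{H}\nabla^{1}_{X_1}\omega X_2,W\bigr).\]
To establish it I would use symmetry of the second fundamental form together with $F_*(X_1)=0$ to obtain $(\nabla F_*)(X_1,\omega X_2)=(\nabla F_*)(\omega X_2,X_1)=-F_*(\mathcal{H}\nabla^{1}_{\omega X_2}X_1)$, then extend $\omega X_2$ locally to a basic field (permissible because $(\nabla F_*)$ is tensorial) and invoke the identity $\mathcal{H}\nabla^{1}_{X_1}\omega X_2=\mathcal{A}_{\omega X_2}X_1$ recorded right after (\ref{eq:2.10}). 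On the other hand, $H\in\Gamma(J\mathcal{D}_2)$ is equivalent to $g_1(\mathcal{H}\nabla^{1}_{X_1}\omega X_2,W)=-g_1(\mathcal{T}_{X_1}\phi X_2,W)$ for all $W\in\Gamma(\mu)$, and the skew-adjointness of $\mathcal{T}_{X_1}$ converts the right-hand side into $g_1(\mathcal{T}_{X_1}W,\phi X_2)$; combining with the auxiliary identity then produces the displayed formula.

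The main obstacle I foresee is the auxiliary identity itself: because one argument is vertical and the other horizontal, the pullback-connection piece of $(\nabla F_*)(X_1,\omega X_2)$ is not a priori zero and has to be eliminated first by swapping to $(\omega X_2,X_1)$ so that $F_*(X_1)=0$ handles it, and then by using a basic extension in order to apply Lemma \ref{Oneil}(c). Once this identity is in place, the rest reduces to routine manipulations with $\phi,\omega,\mathcal{B},\mathcal{C}$, the compatibility (\ref{eq:2.1}), and the skew-adjointness of $\mathcal{T}$.
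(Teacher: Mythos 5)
Your proof is correct and follows essentially the same route as the paper: both reduce the corollary to Proposition \ref{Proposition 3.6} by testing the vertical condition against $Z\in\Gamma(\mathcal{D}_2)$ and the horizontal one against $W\in\Gamma(\mu)$, and then rewrite the resulting scalars in terms of $(\nabla F_*)$. The only differences are in execution: you obtain the first identity by recycling the expansion $(\nabla F_*)(X_1,X_2)=F_*(\mathcal{C}H+\omega V)$ from Theorem \ref{Theorem 3.4} where the paper runs a direct metric-compatibility computation, and your justification of $g_2((\nabla F_*)(X_1,\omega X_2),F_*(W))=-g_1(\mathcal{H}\nabla^{1}_{X_1}\omega X_2,W)$ via the symmetry of $\nabla F_*$, the vanishing of $F_*(X_1)$, and a basic extension of $\omega X_2$ is in fact more careful than the paper's bare appeal to (\ref{eq:2.11}).
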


\begin{proof}For $X_1, X_2 \in\Gamma(ker F_*)$,
$\hat{\nabla}_{X_1}\phi X_2+\mathcal{T}_{X_1}\omega X_2 \in
\Gamma(\mathcal{D}_1)$ if and only if $g_1(\hat{\nabla}_{X_1}\phi
X_2+\mathcal{T}_{X_1}\omega X_2,Z)=0$ for $Z \in
\Gamma(\mathcal{D}_2)$. Skew-symmetric $\mathcal{T}$ and
(\ref{eq:2.7}) imply that
\begin{eqnarray}
g_1(\hat{\nabla}_{X_1}\phi X_2+\mathcal{T}_{X_1}\omega
X_2,Z)&=&g_1(\nabla^{^1}_{X_1}\phi
X_2,Z)\nonumber\\
&-&g_1(\omega X_2,\mathcal{T}_{X_1}Z).\nonumber
\end{eqnarray}
Hence we have
\begin{eqnarray}
g_1(\hat{\nabla}_{X_1}\phi X_2+\mathcal{T}_{X_1}\omega
X_2,Z)&=&-g_1(\phi
X_2,\nabla^{^1}_{X_1}Z)\nonumber\\
&-&g_1(\omega X_2,\mathcal{T}_{X_1}Z).\nonumber
\end{eqnarray}
Using again (\ref{eq:2.7}) we get
\begin{eqnarray}
g_1(\hat{\nabla}_{X_1}\phi X_2+\mathcal{T}_{X_1}\omega
X_2,Z)&=&-g_1(JX_2,\hat{\nabla}_{X_1}Z)\nonumber\\
&-&g_1(\omega X_2,\mathcal{T}_{X_1}Z).\nonumber
\end{eqnarray}
Hence we have
$$
g_1(\hat{\nabla}_{X_1}\phi X_2+\mathcal{T}_{X_1}\omega
X_2,Z)=-g_1(JX_2,\nabla^{^1}_{X_1}Z).$$ Then from (\ref{eq:2.2})
we derive
$$
g_1(\hat{\nabla}_{X_1}\phi X_2+\mathcal{T}_{X_1}\omega
X_2,Z)=g_1(X_2,\nabla^{^1}_{X_1}JZ).$$ Thus we have
$$
g_1(\hat{\nabla}_{X_1}\phi X_2+\mathcal{T}_{X_1}\omega
X_2,Z)=-g_1(\nabla^{^1}_{X_1}X_2,JZ).$$ Then Riemannian submersion
$F$ implies that
$$
g_1(\hat{\nabla}_{X_1}\phi X_2+\mathcal{T}_{X_1}\omega
X_2,Z)=-g_2(F_*(\nabla^{^1}_{X_1}X_2),F_*(JZ)).$$ Using
(\ref{eq:2.11}) we get
\begin{equation}
g_1(\hat{\nabla}_{X_1}\phi X_2+\mathcal{T}_{X_1}\omega
X_2,Z)=g_2((\nabla F_*)(X_1,X_2),F_*(JZ)).\label{eq:3.11}
\end{equation}
On the other hand, for $X_1, X_2 \in \Gamma(ker F_*)$,
$\mathcal{T}_{X_1} \phi X_2+\mathcal{H}\nabla^{^1}_{X_1}\omega X_2
\in \Gamma(J\mathcal{D}_2)$ if and only if $g_1(\mathcal{T}_{X_1}
\phi X_2+\mathcal{H}\nabla^{^1}_{X_1}\omega X_2, W)=0$ for $W\in
\Gamma(\mu)$. Since $\mathcal{T}$ is skew-symmetric, we have
\begin{eqnarray}
g_1(\mathcal{T}_{X_1} \phi X_2+\mathcal{H}\nabla^{^1}_{X_1}\omega
X_2, W)&=&-g_1( \phi X_2,
\mathcal{T}_{X_1}W)\nonumber\\
&+&g_1(\nabla^{^1}_{X_1}\omega X_2, W).\nonumber
\end{eqnarray}
Since $F$ is a Riemannian submersion, we get
\begin{eqnarray}
g_1(\mathcal{T}_{X_1} \phi X_2+\mathcal{H}\nabla^{^1}_{X_1}\omega
X_2, W)&=&-g_1( \phi X_2,
\mathcal{T}_{X_1}W)\nonumber\\
&+&g_2(F_*(\nabla^{^1}_{X_1}\omega X_2),F_*W).\nonumber
\end{eqnarray}
Then from (\ref{eq:2.11}) we arrive at
\begin{eqnarray}
g_1(\mathcal{T}_{X_1} \phi X_2+\mathcal{H}\nabla^{^1}_{X_1}\omega
X_2, W)&=&-g_1( \phi X_2,
\mathcal{T}_{X_1}W)\nonumber\\
&+&g_2(-(\nabla F_*)(X_1,\omega X_2),F_*W).\label{eq:3.12}
\end{eqnarray}
Thus proof follows from (\ref{eq:3.11}), (\ref{eq:3.12}) and
Proposition \ref{Proposition 3.6}
\end{proof}

From Proposition \ref{Proposition 3.3} and Proposition
\ref{Proposition 3.5} we have the following.

\begin{thrm}\label{Theorem 3.8}Let $F$ be a semi-invariant
submersion from a K\"{a}hler manifold $(M_1,g_1,J)$ onto a
Riemannian manifold $(M_2,g_2)$. Then $M_1$ is locally a product
Riemannian manifold $M_{\mathcal{D}_1}\times M_{\mathcal{D}_2}\times
M_{(ker F_*)^\perp}$ if and only if
$$(\nabla \phi)=0\quad \mathrm{on }\quad  ker F_*$$
and
$$\mathcal{A}_{Z_1}\mathcal{B}Z_2+\mathcal{H}\nabla^{^1}_{Z_1}\mathcal{C}Z_2
\in
\Gamma(\mu),\mathcal{A}_{Z_1}\mathcal{C}Z_2+\mathcal{V}\nabla^{^1}_{Z_1}Z_2\in
\Gamma(\mathcal{D}_2)$$ {\it for} $Z_1,Z_2 \in \Gamma((ker
F_*)^\perp)$, where $M_{\mathcal{D}_1}$, $M_{\mathcal{D}_2}$ and
$M_{(ker F_*)^\perp}$ are integral manifolds of the distributions
$\mathcal{D}_1$, $\mathcal{D}_2$ and $(ker F_*)^\perp$.
\end{thrm}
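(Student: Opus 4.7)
The plan is to prove the equivalence by combining Proposition \ref{Proposition 3.3} with Proposition \ref{Proposition 3.5}, using the classical local decomposition principle for Riemannian manifolds equipped with an orthogonal splitting into integrable, totally geodesic distributions.

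The starting point is the orthogonal direct sum decomposition $TM_1 = \mathcal{D}_1 \oplus \mathcal{D}_2 \oplus (\ker F_*)^\perp$, which comes immediately from (\ref{eq:3.1}) and the definition of the horizontal distribution. $M_1$ is locally a Riemannian product $M_{\mathcal{D}_1}\times M_{\mathcal{D}_2}\times M_{(\ker F_*)^\perp}$ if and only if each of the three summands is integrable and defines a totally geodesic foliation on $M_1$: the forward direction is just the fact that each factor of a Riemannian product is a totally geodesic submanifold, while the reverse direction is the iterated local de Rham-type decomposition applied to the pair $(\ker F_*, (\ker F_*)^\perp)$ and then to the pair $(\mathcal{D}_1,\mathcal{D}_2)$ inside each fiber. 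So it suffices to translate each of these integrability/totally-geodesic conditions into the hypotheses in the theorem statement.

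Proposition \ref{Proposition 3.5} already does one half of the translation: the two horizontal conditions $\mathcal{A}_{Z_1}\mathcal{B}Z_2+\mathcal{H}\nabla^{1}_{Z_1}\mathcal{C}Z_2 \in \Gamma(\mu)$ and $\mathcal{A}_{Z_1}\mathcal{C}Z_2+\mathcal{V}\nabla^{1}_{Z_1}Z_2\in \Gamma(\mathcal{D}_2)$ are equivalent to $(\ker F_*)^\perp$ being a totally geodesic foliation in $M_1$, producing the $M_{(\ker F_*)^\perp}$ factor. Proposition \ref{Proposition 3.3} does the other half: $(\nabla\phi) = 0$ on $\ker F_*$ is equivalent to each fiber being an intrinsic Riemannian product $M_{\mathcal{D}_1}\times M_{\mathcal{D}_2}$, which splits the vertical distribution into the first two factors of the triple product.

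The main obstacle — and the step most worth pinning down — is that Proposition \ref{Proposition 3.3} only supplies a product structure on the fibers with respect to the induced metric $\hat g = g_1|_{\ker F_*}$, and does not by itself address how the fibers sit in $M_1$. I would overcome this by arguing that once $(\ker F_*)^\perp$ is a totally geodesic foliation, passing to a local coordinate patch adapted simultaneously to the three mutually orthogonal distributions $\mathcal{D}_1,\mathcal{D}_2,(\ker F_*)^\perp$ makes $g_1$ block-diagonal: the $(\ker F_*)^\perp$-block is intrinsic to the horizontal leaves by Proposition \ref{Proposition 3.5}, and the two vertical blocks are intrinsic to $M_{\mathcal{D}_1}$ and $M_{\mathcal{D}_2}$ by Proposition \ref{Proposition 3.3}. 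This block-diagonal form is exactly the statement that $M_1$ is locally the claimed triple Riemannian product, and the converse direction is read off by restricting the product structure back to each factor.
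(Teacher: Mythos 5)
Your proposal takes essentially the same route as the paper, whose entire proof of Theorem \ref{Theorem 3.8} is the single sentence that it ``follows from Proposition \ref{Proposition 3.3} and Proposition \ref{Proposition 3.5}.'' The subtlety you flag --- that Proposition \ref{Proposition 3.3} only yields an \emph{intrinsic} product structure on each fibre, so an extra argument is needed to assemble the three factors into a Riemannian product of $M_1$ itself --- is genuine, but the paper leaves it entirely unaddressed, so on that point your sketch already goes beyond the source rather than diverging from it.
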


Also from Corollary \ref{Corollary 3.7} and Proposition
\ref{Proposition 3.5}, we have the following result.

\begin{thrm}\label{Theorem 3.9}Let $F$ be a semi-invariant
submersion from a K\"{a}hler manifold $(M_1,g_1,J)$ onto a
Riemannian manifold $(M_2,g_2)$. Then $M_1$ is locally a product
Riemannian manifold $M_{ker F_*}\times M_{(ker F_*)^\perp}$ if and
only if
\begin{eqnarray}
g_2((\nabla F_*)(X_1,X_2),F_*(JZ))&=&0,\nonumber\\
g_2((\nabla F_*)(X_1,\omega
X_2),F_*(W))&=&-g_1(\mathcal{T}_{X_1}W,\phi X_2)\nonumber
\end{eqnarray}
and
$$\mathcal{A}_{Z_1}\mathcal{B}Z_2+\mathcal{H}\nabla^{^1}_{Z_1}\mathcal{C}Z_2
\in
\Gamma(\mu),\mathcal{A}_{Z_1}\mathcal{C}Z_2+\mathcal{V}\nabla^{^1}_{Z_1}Z_2\in
\Gamma(\mathcal{D}_2)$$ for $X_1,X_2 \in \Gamma(ker F_*)$, $W \in
\Gamma(\mu)$, $Z \in \Gamma(\mathcal{D}_2)$ and $Z_1,Z_2 \in
\Gamma((ker F_*)^\perp)$, where $M_{ker F_*}$ and
$M_{(ker F_*)^\perp}$ are integral manifolds of the distributions $ker F_*$ and $(ker F_*)^\perp$.
\end{thrm}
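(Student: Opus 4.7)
The plan is to recognize Theorem~3.9 as essentially the conjunction of Corollary~\ref{Corollary 3.7} (which controls when $\ker F_*$ defines a totally geodesic foliation) and Proposition~\ref{Proposition 3.5} (which controls when $(\ker F_*)^\perp$ does), combined with the standard local decomposition principle: a Riemannian manifold is locally a Riemannian product of integral submanifolds of two complementary orthogonal distributions if and only if each distribution is integrable and defines a totally geodesic foliation.

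First I would set up the orthogonal decomposition $TM_1=\ker F_*\oplus (\ker F_*)^\perp$. The distribution $\ker F_*$ is automatically integrable since it is the vertical distribution of a submersion (this is already noted in the paragraph following Definition~3.1). For $(\ker F_*)^\perp$, integrability and the totally geodesic property are both encoded in the vanishing of the appropriate components of $\nabla^{^1}_{Z_1}Z_2$ for $Z_1,Z_2\in\Gamma((\ker F_*)^\perp)$; this is exactly the content of Proposition~\ref{Proposition 3.5}, so I would cite it directly to translate the "totally geodesic foliation" condition on $(\ker F_*)^\perp$ into the displayed $\mu$- and $\mathcal{D}_2$-containment conditions.

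Next I would apply the standard local decomposition criterion (a consequence of the classical result that two orthogonal, complementary, integrable distributions on a Riemannian manifold produce a local Riemannian product iff both foliations are totally geodesic). Thus $M_1$ is locally $M_{\ker F_*}\times M_{(\ker F_*)^\perp}$ if and only if both $\ker F_*$ and $(\ker F_*)^\perp$ define totally geodesic foliations. By Proposition~\ref{Proposition 3.6}, the totally geodesic condition on $\ker F_*$ is equivalent to $\mathcal{T}_{X_1}\phi X_2+\mathcal{H}\nabla^{^1}_{X_1}\omega X_2\in\Gamma(J\mathcal{D}_2)$ and $\hat{\nabla}_{X_1}\phi X_2+\mathcal{T}_{X_1}\omega X_2\in\Gamma(\mathcal{D}_1)$; and by Corollary~\ref{Corollary 3.7}, these two containments are equivalent to the two displayed equations involving $(\nabla F_*)$ stated in the theorem. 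Combining these equivalences with Proposition~\ref{Proposition 3.5} yields the claim.

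There is no real obstacle here beyond bookkeeping: the substantive geometric work has already been done in Corollary~\ref{Corollary 3.7} and Proposition~\ref{Proposition 3.5}, and the remaining step is the invocation of the local product decomposition theorem. The only point that deserves a brief line of justification in the write-up is why the two totally geodesic foliation conditions indeed yield a \emph{Riemannian} product locally (as opposed to just a topological one); this follows from the fact that the two distributions are mutually orthogonal and parallel in the sense that the second fundamental forms of both foliations vanish, which makes the induced local product metric split.
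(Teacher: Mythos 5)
Your proposal is correct and follows exactly the route the paper intends: Theorem~\ref{Theorem 3.9} is stated in the paper as an immediate consequence of Corollary~\ref{Corollary 3.7} and Proposition~\ref{Proposition 3.5}, combined with the standard criterion that two complementary orthogonal distributions yield a local Riemannian product precisely when both define totally geodesic foliations. Your added remark on why the product is Riemannian (vanishing second fundamental forms of both foliations) is a sensible clarification but does not change the argument.
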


\section{Semi-invariant submersions with totally umbilical fibers}\label{sct4}

In this section we give two theorems on semi-invariant submersions
with totally umbilical fibers. First result shows that a
semi-invariant submersion puts some restrictions on total
manifolds. Also we obtain a classification for such submersions.
Let $F$ be a Riemannian submersion from a Riemannian manifold onto
a Riemannian manifold $(M_2,g_2)$. Recall that a Riemannian
submersion is called a Riemannian submersion with totally
umbilical fibers if
\begin{equation}
\mathcal{T}_X Y=g_1(X,Y)H \label{eq:4.1}
\end{equation}
for $X, Y \in \Gamma(ker F_*)$, where $H$ is the mean curvature
vector field of the fiber. We also recall that a simply connected
complete K\"ahler manifold of constant sectional curvature $c$ is
called a complex space-form, denoted by $M(c)$. The curvature
tensor of $M(c)$ is
\begin{eqnarray}
R(X,\,Y)Z & = & \frac{c}{4}\,[ g\,(Y,\,Z)X - g\,(X,\,Z)Y + g\,(JY,\,Z)JX  \nonumber \\
          & - & g\,(JX,\,Z)JY + 2\,g\,(X,\,JY)JZ ] \label{eq:4.2} \end{eqnarray}
for $X,Y,Z \in \Gamma(TM)$. Moreover, from \cite{O'Neill} we have
the following relation for a Riemannian submersion
\begin{equation}
g_1(R^1(X_1,X_2)X_3,Z)=g_1((\nabla_{X_2} \mathcal{T})_{X_1}
X_3,Z)-g_1((\nabla_{X_1} \mathcal{T})_{X_2} X_3,Z) \label{eq:4.3}
\end{equation}
 for $X_1,X_2,X_3 \in \Gamma(ker F_*)$ and $Z
\in \Gamma((ker F_*)^\perp)$, where $R^1$ is the curvature tensor
field of $M_1$
and $(\nabla \mathcal{T})$ is the covariant derivative of $\mathcal{T}$.\\

By using (\ref{eq:4.1}), (\ref{eq:4.2}) and (\ref{eq:4.3}), as in
CR-submanifolds, see: Theorem 1.2 of \cite[p.78]{Bejancu}, we have
the following result.

\begin{thrm}\label{Theorem 4.1}Let $F$ be a semi-invariant
submersion with totally umbilical fibers from a complex space form
$(M_1(c),g_1,J)$ onto a Riemannian manifold $(M_2,g_2)$. Then $c=0$.
\end{thrm}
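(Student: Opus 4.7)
The plan is to evaluate the Codazzi-type identity (\ref{eq:4.3}) on both sides for an adapted choice of vectors, using the rigid structure imposed by the totally umbilical condition (\ref{eq:4.1}) together with the specific form (\ref{eq:4.2}) of the curvature of $M_1(c)$ to force $c=0$.

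First I would simplify the right-hand side of (\ref{eq:4.3}). For $X_1, X_3 \in \Gamma(\ker F_*)$ and an arbitrary $E$, writing out $(\nabla_{E}\mathcal{T})_{X_1}X_3$ and inserting $\mathcal{T}_{X_1}X_3 = g_1(X_1,X_3)H$, the Leibniz expansion of $E(g_1(X_1,X_3))H$ cancels exactly against the correction terms $-\mathcal{T}_{\hat{\nabla}_E X_1}X_3 - \mathcal{T}_{X_1}\hat{\nabla}_E X_3$, since for $X_3$ vertical one has $g_1(\nabla^{^1}_E X_1, X_3) = g_1(\hat{\nabla}_E X_1, X_3)$ and similarly for the other factor. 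This yields the clean identity
$$(\nabla_{E}\mathcal{T})_{X_1}X_3 = g_1(X_1,X_3)\,\nabla^{^1}_E H.$$
Plugging into (\ref{eq:4.3}) reduces it to
$$g_1(R^1(X_1,X_2)X_3,Z) = g_1(X_1,X_3)\,g_1(\nabla^{^1}_{X_2}H,Z) - g_1(X_2,X_3)\,g_1(\nabla^{^1}_{X_1}H,Z).$$

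Next I would specialize this identity to a configuration that kills the right-hand side while leaving a nontrivial multiple of $c$ on the left. Choose a unit $X \in \Gamma(\mathcal{D}_1)$ and a unit $Y \in \Gamma(\mathcal{D}_2)$, and set $X_1 = X$, $X_2 = JX$ (which lies in $\Gamma(\mathcal{D}_1)$ since $\mathcal{D}_1$ is $J$-invariant), $X_3 = Y$, and $Z = JY$, noting that $Z \in \Gamma(J\mathcal{D}_2)\subset\Gamma((\ker F_*)^{\perp})$. Orthogonality of $\mathcal{D}_1$ and $\mathcal{D}_2$ gives $g_1(X,Y) = g_1(JX,Y) = 0$, so the right-hand side vanishes identically.

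On the left, expanding $g_1(R^1(X, JX)Y, JY)$ with (\ref{eq:4.2}) I would show that four of the five bracket terms vanish: each contains a factor of the form $g_1(A, B)$ with either one of $A, B$ vertical and the other horizontal, or with $A \in \mathcal{D}_1$ and $B \in \mathcal{D}_2$. Only the term $2\,g_1(X, J(JX))\,g_1(JY, JY) = -2$ survives, contributing $-c/2$. Equating the two sides forces $c = 0$.

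The only real obstacle is structural rather than computational: the argument requires both $\mathcal{D}_1$ and $\mathcal{D}_2$ to be nonzero, so that the quadruple $(X, JX, Y, JY)$ is genuinely available and the distinctively K\"ahlerian term $2g(X,JY)JZ$ of (\ref{eq:4.2}) can be detected. The degenerate cases $\mathcal{D}_1=\{0\}$ or $\mathcal{D}_2=\{0\}$ (anti-invariant or almost Hermitian submersion) make the corresponding instance of (\ref{eq:4.3}) trivial in the directions available and fall back on results already known in those settings.
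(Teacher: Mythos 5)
Your argument is correct and is precisely the computation the paper leaves implicit: the paper only cites the analogous CR-submanifold result (Bejancu, Theorem 1.2, p.~78), whose proof is exactly your route — reduce the right side of (\ref{eq:4.3}) to $g_1(X_1,X_3)g_1(\nabla^{^1}_{X_2}H,Z)-g_1(X_2,X_3)g_1(\nabla^{^1}_{X_1}H,Z)$ via the umbilical condition, then evaluate on $(X,JX,Y,JY)$ with $X\in\mathcal{D}_1$, $Y\in\mathcal{D}_2$ so that only the term $2g_1(X,J(JX))JY$ of (\ref{eq:4.2}) survives and yields $-c/2=0$. One caveat on your closing paragraph: the properness assumption $\mathcal{D}_1\neq\{0\}\neq\mathcal{D}_2$ is not merely a limitation of the method but is genuinely needed for the statement itself — for $\mathcal{D}_2=\{0\}$ the fibers are complex, hence minimal, so totally umbilical forces totally geodesic, and Escobales' submersions from $\mathbb{C}P^{2n+1}$ with complex totally geodesic fibers give counterexamples with $c\neq 0$; so the degenerate cases do not ``fall back on known results'' but must be excluded, as in Bejancu's theorem for proper CR-submanifolds.
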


We now give a classification theorem for semi-invariant Riemannian
submersions with totally umbilical fibers. But we need the following
result which shows that the mean curvature vector field of
semi-invariant Riemannian submersions has special form.

\begin{lem}\label{Lemma 4.2}Let $F$ be a semi-invariant
submersion with totally umbilical fibers from a K\"{a}hler manifold
$(M_1,g_1,J)$ onto a Riemannian manifold $(M_2,g_2)$. Then $H \in
\Gamma(J\mathcal{D}_2)$.
\end{lem}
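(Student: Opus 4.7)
The plan is to show that $\mathcal{C}H = 0$, which together with the decomposition $(ker F_*)^\perp = J\mathcal{D}_2 \oplus \mu$ gives $H \in \Gamma(J\mathcal{D}_2)$. Since $H$ is a section of $(ker F_*)^\perp$ (it is normal to the fibers), we may decompose it as $H = \mathcal{B}H\text{-preimage} + H_\mu$ and our task is to kill the $\mu$-component. The crucial trick is to exploit the invariant part $\mathcal{D}_1$: whenever $X \in \Gamma(\mathcal{D}_1)$ is nonzero, both $X$ and $JX$ belong to $ker F_*$, so the totally umbilical formula (\ref{eq:4.1}) can be applied to $\mathcal{T}_XX$ \emph{and} to $\mathcal{T}_X JX$ simultaneously.

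The main computation proceeds from the Kähler identity $\nabla^1_X JX = J\nabla^1_X X$ evaluated at a nonzero $X \in \Gamma(\mathcal{D}_1)$. Applying (\ref{eq:2.7}) together with the totally umbilical hypothesis yields
$$\nabla^1_X X = g_1(X,X)\,H + \hat{\nabla}_X X, \qquad \nabla^1_X JX = g_1(X,JX)\,H + \hat{\nabla}_X JX = \hat{\nabla}_X JX,$$
where the second equality uses $g_1(X,JX)=0$, a direct consequence of the Hermitian condition (\ref{eq:2.1}). Equating the two expressions gives
$$\hat{\nabla}_X JX = g_1(X,X)\,JH + J\hat{\nabla}_X X.$$
Since the left-hand side is vertical, the horizontal component of the right-hand side must vanish.

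Now decompose the right-hand side using (\ref{eq:3.3}) and (\ref{eq:3.4}). Write $JH = \mathcal{B}H + \mathcal{C}H$ with $\mathcal{B}H \in \Gamma(\mathcal{D}_2)$ vertical and $\mathcal{C}H \in \Gamma(\mu)$ horizontal; and write $\hat{\nabla}_X X = A + B$ with $A \in \Gamma(\mathcal{D}_1)$, $B \in \Gamma(\mathcal{D}_2)$, so that $J\hat{\nabla}_X X = JA + JB$ with $JA \in \Gamma(\mathcal{D}_1)$ vertical and $JB \in \Gamma(J\mathcal{D}_2)$ horizontal. The horizontal component of the right-hand side is therefore
$$g_1(X,X)\,\mathcal{C}H + JB = 0.$$
Because $\mathcal{C}H \in \Gamma(\mu)$ and $JB \in \Gamma(J\mathcal{D}_2)$ lie in orthogonal distributions, each summand must vanish separately, and $g_1(X,X) \neq 0$ then forces $\mathcal{C}H = 0$, i.e.\ $H \in \Gamma(J\mathcal{D}_2)$.

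The subtlety worth flagging is the degenerate case $\mathcal{D}_1 = \{0\}$, in which the above argument is vacuous because no nonzero $X \in \Gamma(\mathcal{D}_1)$ exists. In that situation the submersion is anti-invariant and the statement reduces to a result established in \cite{Sahin}; a naive attempt to salvage the same Kähler identity with $X \in \mathcal{D}_2$ (so that $JX$ is horizontal and one must use (\ref{eq:2.8})) collapses into a tautology, because the totally umbilical hypothesis only controls $\mathcal{T}$ on pairs of vertical arguments, and the $\mu$-pairing of the resulting identity gives $g_1(X,X)\,g_1(\mathcal{C}H,W) = g_1(X,X)\,g_1(\mathcal{C}H,W)$. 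This is the main obstacle one would have to treat separately, and is the reason the invariant part $\mathcal{D}_1$ plays such a decisive role in the argument above.
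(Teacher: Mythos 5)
Your proof is correct and follows essentially the same route as the paper's: both apply the K\"ahler identity to vectors of $\mathcal{D}_1$, invoke the umbilical condition (\ref{eq:4.1}) via (\ref{eq:2.7}), and isolate the $\mu$-component of $JH$ using the orthogonality of $\mu$ and $J\mathcal{D}_2$; the paper merely phrases this as inner products against $W\in\Gamma(\mu)$ with a polarization in $X_1,X_2$, whereas you take the diagonal $X_1=X_2=X$ and compare horizontal components directly. Your remark about the degenerate case $\mathcal{D}_1=\{0\}$ is well taken --- the paper's argument needs $g_1(X_1,X_2)\neq 0$ for $X_1,X_2\in\Gamma(\mathcal{D}_1)$ and so carries exactly the same restriction without acknowledging it.
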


\begin{proof}Using (\ref{eq:2.1}), (\ref{eq:2.2}),
(\ref{eq:2.7}), (\ref{eq:3.3}) and (\ref{eq:3.4}) we get
$$\mathcal{T}_{X_1}JX_2+\hat{\nabla}_{X_1}JX_2=\mathcal{B}\mathcal{T}_{X_1}X_2+\mathcal{C}\mathcal{T}_{X_1}X_2+\phi
\hat{\nabla}_{X_1}X_2+\omega \hat{\nabla}_{X_1}X_2$$ for $X_1,X_2
\in \Gamma(\mathcal{D}_1)$.  Thus, for $W \in \Gamma(\mu)$, we
obtain
$$g_1(\mathcal{T}_{X_1}JX_2,W)=g_1(\mathcal{C}\mathcal{T}_{X_1}X_2,W).$$
Using (\ref{eq:4.1}) we derive
$$g_1(X_1,JX_2)g_1(H,W)=g_1(J\mathcal{T}_{X_1}X_2,W).$$
Hence we have
$$g_1(X_1,JX_2)g_1(H,W)=-g_1(\mathcal{T}_{X_1}X_2,JW).$$
Using again (\ref{eq:4.1}) we arrive at
\begin{equation}
g_1(X_1,JX_2)g_1(H,W)=-g_1(X_1,X_2)g_1(H,JW).\label{eq:4.6}
\end{equation} Interchanging the role of $X_1$ and $X_2$, we obtain
\begin{equation}
g_1(X_2,JX_1)g_1(H,W)=-g_1(X_2,X_1)g_1(H,JW).\label{eq:4.7}
\end{equation}
Thus from (\ref{eq:4.6}) and (\ref{eq:4.7}) we derive
$$g_1(X_1,X_2)g_1(H,JW)=0$$
which shows that $H \in \Gamma(J\mathcal{D}_2)$ due to $\mu$ is
invariant distribution.
\end{proof}

We now give a classification theorem for a semi-invariant submersion
with totally umbilical fibers which is similar to that Theorem 6.1
of \cite[p.96]{Yano-Kon2}, therefore we omit its proof. We note that
Lemma \ref{Lemma 4.2} implies that one can use the method which was
used in the proof of Theorem 6.1 of \cite{Yano-Kon2}.

\begin{thrm}\label{Theorem 4.3}Let $F$ be a semi-invariant
submersion with totally umbilical fibers from a K\"{a}hler manifold
$(M_1,g_1,J)$ onto a Riemannian manifold $(M_2,g_2)$. Then either
$\mathcal{D}_2$ is one dimensional or the fibers are totally
geodesic.
\end{thrm}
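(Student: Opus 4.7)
The plan is to extract a single scalar identity of the form
$$g_1(X,Y)\,g_1(Z,U) = g_1(X,Z)\,g_1(U,Y) \quad \text{for all } X,Y,Z \in \mathcal{D}_2,$$
where $U \in \mathcal{D}_2$ is the unique vector with $H = JU$ (which exists by Lemma \ref{Lemma 4.2}), and then run a standard polarization argument that splits into the two stated cases.

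To obtain the identity, I would start from the Kähler condition $\nabla^{1}_X JZ = J\nabla^{1}_X Z$ for $X, Z \in \Gamma(\mathcal{D}_2)$. On the left, $JZ \in \Gamma(J\mathcal{D}_2) \subset \Gamma((\ker F_*)^{\perp})$, so by (\ref{eq:2.8}) the vertical part of $\nabla^1_X JZ$ is $\mathcal{T}_X JZ$. On the right, (\ref{eq:2.7}) gives $\nabla^1_X Z = \mathcal{T}_X Z + \hat{\nabla}_X Z$, and the totally umbilical hypothesis (\ref{eq:4.1}) turns $\mathcal{T}_X Z$ into $g_1(X,Z)H$. Taking the $g_1$-inner product of both sides with an arbitrary $Y \in \Gamma(\mathcal{D}_2)$, using skew-symmetry of $\mathcal{T}_X$ on the left and $J$-invariance of $g_1$ on the right (along with the fact that $\hat{\nabla}_X Z \in \ker F_*$ is orthogonal to $JY \in J\mathcal{D}_2$), I expect everything to collapse to
$$-g_1(X,Y)\,g_1(JZ,H) \;=\; g_1(X,Z)\,g_1(JH,Y).$$
Writing $H = JU$ with $U \in \mathcal{D}_2$ (Lemma \ref{Lemma 4.2}) and using $g(JA,JB)=g(A,B)$, both sides reduce as advertised.

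From here, the concluding step is purely algebraic. Setting $X = Y$ with $g_1(X,X)=1$ gives $g_1(Z,U) = g_1(X,Z)\,g_1(U,X)$ for every $Z \in \mathcal{D}_2$, which says that $U$ equals its orthogonal projection onto the line spanned by $X$; equivalently, $U$ is a scalar multiple of every unit vector of $\mathcal{D}_2$. If $\dim \mathcal{D}_2 \ge 2$, we may pick two orthonormal vectors $X_1, X_2 \in \mathcal{D}_2$ and the conclusion forces $U = 0$, whence $H = 0$ and the fibers are totally geodesic by (\ref{eq:4.1}). Otherwise $\dim \mathcal{D}_2 = 1$, which is the alternative.

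I do not expect a major obstacle in this proof; the only subtlety is the careful bookkeeping of where each term lives (vertical versus horizontal, and within each, the splitting into $\mathcal{D}_1 \oplus \mathcal{D}_2$ and $J\mathcal{D}_2 \oplus \mu$). In particular, Lemma \ref{Lemma 4.2} is essential twice: it lets us write $H = JU$ with $U \in \mathcal{D}_2$ so that the scalar identity involves vectors all lying in $\mathcal{D}_2$, and it justifies the orthogonality $\hat{\nabla}_X Z \perp JY$ used to kill the $\omega$-component. This is precisely the reason the paper's authors record Lemma \ref{Lemma 4.2} before invoking the Yano--Kon template, and it is the reason the final polarization argument can be carried out inside the single distribution $\mathcal{D}_2$.
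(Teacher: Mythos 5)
Your proof is correct and is exactly the argument the paper intends: the author omits the proof of Theorem \ref{Theorem 4.3}, saying only that Lemma \ref{Lemma 4.2} makes the Yano--Kon totally-umbilical CR-submanifold method applicable, and your derivation of $g_1(X,Y)\,g_1(Z,U)=g_1(X,Z)\,g_1(U,Y)$ on $\mathcal{D}_2$ followed by polarization is precisely that method transplanted to the submersion setting. (One small misattribution in your closing commentary: the orthogonality $\hat{\nabla}_XZ\perp JY$ follows from $J\mathcal{D}_2\subseteq(\ker F_*)^{\perp}$ in the definition of a semi-invariant submersion, not from Lemma \ref{Lemma 4.2}, which is needed only once, to write $H=JU$ with $U\in\mathcal{D}_2$.)
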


\end{document}